\newtheorem{Theorem}{Theorem} 
\newaliascnt{Lemma}{Theorem}
\newtheorem{Lemma}[Lemma]{Lemma}
\newaliascnt{Proposition}{Theorem}
\newtheorem{Proposition}[Proposition]{Proposition}
\newaliascnt{Corollary}{Theorem}
\newtheorem{Corollary}[Corollary]{Corollary}
\numberwithin{equation}{section}
\renewcommand{\phi}{\varphi}
\newcommand{\C}{\operatorname{C}}
\newcommand{\Z}{\operatorname{Z}}
\newcommand{\Br}{\operatorname{Br}}
\newcommand{\Ker}{\operatorname{Ker}}
\newcommand{\Span}{\operatorname{span}}
\newcommand{\Cl}{\operatorname{Cl}}
\mathchardef\ordinarycolon\mathcode`\:  
\title{Loewy lengths of centers of blocks II}
\author{Burkhard Külshammer\footnote{Institut für Mathematik, Friedrich-Schiller-Universität, 07743 Jena, Germany, 
\href{mailto:kuelshammer@uni-jena.de}{kuelshammer@uni-jena.de}}, Yoshihiro Otokita\footnote{Department of Mathematics and Informatics, Chiba University, Chiba–Shi, 263--8522, Japan, \href{mailto:otokita@chiba-u.jp}{otokita@chiba-u.jp}} \ and Benjamin Sambale\footnote{Fachbereich Mathematik, TU Kaiserslautern, 67653 Kaiserslautern, Germany, 
\href{mailto:sambale@mathematik.uni-kl.de}{sambale@mathematik.uni-kl.de}}}
\date{\today}
\begin{document}
\frenchspacing
\maketitle
\begin{abstract}\noindent
Let $ZB$ be the center of a $p$-block $B$ of a finite group with defect group $D$. 
We show that the Loewy length $LL(ZB)$ of $ZB$ is bounded by $\frac{|D|}{p}+p-1$ provided $D$ is not cyclic. 
If $D$ is non-abelian, we prove the stronger bound $LL(ZB)<\min\{p^{d-1},4p^{d-2}\}$ where $|D|=p^d$. 
Conversely, we classify the blocks $B$ with $LL(ZB)\ge\min\{p^{d-1},4p^{d-2}\}$.  
This extends some results previously obtained by the present authors. Moreover, we characterize blocks with uniserial center.
\end{abstract}

\textbf{Keywords:} center of blocks, Loewy length\\
\textbf{AMS classification:} 20C05, 20C20

\section{Introduction}

The aim of this paper is to extend some results on Loewy lengths of centers of blocks obtained in \cite{KS2,Otokita}. In the following we will reuse some of the notation introduced in \cite{KS2}. In particular, $B$ is a block of a finite group $G$ with respect to an algebraically closed field $F$ of characteristic $p>0$. Moreover, let $D$ be a defect group of $B$. The second author has shown in \cite[Corollary~3.3]{Otokita} that the Loewy length of the center of $B$ is bounded by
\[LL(ZB)\le|D|-\frac{|D|}{\exp(D)}+1\]
where $\exp(D)$ is the exponent of $D$. It was already known to Okuyama~\cite{OkuyamaLL} that this bound is best possible if $D$ is cyclic. The first and the third author have given in \cite[Theorem~1]{KS2} the optimal bound $LL(ZB)\le LL(FD)$ for blocks with abelian defect groups. Our main result of the present paper establishes the following bound for blocks with non-abelian defect groups:
\[LL(ZB)<\min\{p^{d-1},4p^{d-2}\}\]
where $|D|=p^d$. As a consequence we obtain
\[LL(ZB)\le p^{d-1}+p-1\]
for all blocks with non-cyclic defect groups. 
It can be seen that this bound is optimal whenever $B$ is nilpotent and $D\cong C_{p^{d-1}}\times C_p$.

In the second part of the paper we show that $LL(ZB)$ depends more on $\exp(D)$ than on $|D|$. We prove for instance that $LL(ZB)\le d^2\exp(D)$ unless $d=0$.
Finally, we use the opportunity to improve a result of Willems~\cite{Willemsreptype} about blocks with uniserial center.

In addition to the notation used in the papers cited above, we introduce the following objects. 
Let $\Cl(G)$ be the set of conjugacy classes of $G$. A $p$-subgroup $P\le G$ is called a defect group of $K\in\Cl(G)$ if $P$ is a Sylow $p$-subgroup of $\C_G(x)$ for some $x\in K$. Let $\Cl_P(G)$ be the set of conjugacy classes with defect group $P$. Let $K^+:=\sum_{x\in K}x\in FG$ and 
\begin{align*}
I_P(G)&:=\langle K^+:K\in\Cl_P(G)\rangle\subseteq ZFG,\\
I_{\le P}(G)&:=\sum_{Q\le P}I_Q(G)\unlhd ZFG,\\
I_{<P}(G)&:=\sum_{Q<P}I_Q(G)\unlhd ZFG.
\end{align*} 

\section{Results}

We begin by restating a lemma of Passman~\cite[Lemma~2]{PassmanLem}. For the convenience of the reader we provide a (slightly easier) proof.

\begin{Lemma}[Passman]\label{Passlem}
Let $P$ be a central $p$-subgroup of $G$. Then $I_{\le P}(G)\cdot JZFG=I_{\le P}(G)\cdot JFP$.
\end{Lemma}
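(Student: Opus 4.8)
The plan is to prove the two inclusions separately, the forward one being formal and the reverse one carrying all the content.

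Since $P$ is central we have $FP\subseteq ZFG$, and $JFP$ is the augmentation ideal of the local algebra $FP$, hence nilpotent. Because $F$ is algebraically closed and $ZFG$ is commutative, $ZFG/JZFG\cong\prod_B F$ via the central characters $\omega_B\colon ZFG\to F$, so $JZFG=\bigcap_B\Ker\omega_B$. For each block $B$ the restriction $\omega_B|_{FP}$ is an $F$-algebra homomorphism $FP\to F$, which must be the augmentation; thus $\omega_B(u)=1$ for all $u\in P$, whence $JFP\subseteq\bigcap_B\Ker\omega_B=JZFG$. Multiplying by $I_{\le P}(G)$ gives $I_{\le P}(G)\cdot JFP\subseteq I_{\le P}(G)\cdot JZFG$, which is the easy inclusion.

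For the reverse inclusion I would set $W:=I_{\le P}(G)/\bigl(I_{\le P}(G)\cdot JFP\bigr)$. As $JFP$ is central, $I_{\le P}(G)\cdot JFP$ is a $ZFG$-submodule of the ideal $I_{\le P}(G)$, so $W$ is a $ZFG$-module, and the desired inclusion $I_{\le P}(G)\cdot JZFG\subseteq I_{\le P}(G)\cdot JFP$ says precisely that $JZFG$ annihilates $W$, i.e. that $W$ is a semisimple $ZFG$-module. Since $W=\bigoplus_B e_BW$ with $e_B$ the block idempotents, this is equivalent to showing that every class sum $L^+$ acts on each component $e_BW$ as the scalar $\omega_B(L^+)$: indeed $L^+e_B\equiv\omega_B(L^+)e_B\pmod{J(Z(FGe_B))}$, so the scalar action forces $J(Z(FGe_B))=JZFG\,e_B$ to annihilate $e_BW$, and summing over $B$ shows that the $ZFG$-action on $W$ factors through $ZFG/JZFG\cong\prod_B F$.

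To compute this action I would exploit the central $P$-action. For $u\in P$ the map $K\mapsto uK$ permutes the conjugacy classes and preserves defect groups, because $\C_G(ux)=\C_G(x)$; hence $P$ permutes the classes of defect group $\le_G P$, and $W$ acquires an $F$-basis indexed by the $P$-orbits, with $\overline{K^+}=\overline{(uK)^+}$. As $L^+\in ZFG$ commutes with every $u\in FP$, multiplication by $L^+$ is $FP$-linear; since $I_{\le P}(G)$ is an ideal, $L^+K^+=\sum_M a_{LKM}M^+$ again lies in $I_{\le P}(G)$, so the action descends to $W$. The remaining task is to evaluate $\sum_M a_{LKM}\,\overline{M^+}$ by grouping the classes $M$ into their $P$-orbits, using $\omega_B(u)=1$, and to identify the outcome with $\omega_B(L^+)\,\overline{K^+}$ on the block-$B$ part.

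The main obstacle is exactly this last matching in characteristic $p$. The obvious idea of averaging the structure constants $a_{LKM}$ over a $P$-orbit breaks down, because the orbit lengths are powers of $p$ and hence vanish in $F$; instead one must read the cancellation of the radical contributions modulo $I_{\le P}(G)\cdot JFP$ directly off the structure constants, feeding in both $\omega_B(u)=1$ for $u\in P$ and the defect bound on the classes occurring in $L^+K^+$. Showing that these contributions collapse to the single scalar $\omega_B(L^+)$ is the technical heart of the proof, and is presumably the point at which the argument can be made slightly simpler than Passman's original.
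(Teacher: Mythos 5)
Your setup is sound as far as it goes: the easy inclusion $I_{\le P}(G)\cdot JFP\subseteq I_{\le P}(G)\cdot JZFG$ is correct, and your structural description of $W=I_{\le P}(G)/I_{\le P}(G)\cdot JFP$ (the free action of $P$ on the classes with defect group $P$, with the $P$-orbits, equivalently the $p$-regular classes of defect group $P$, indexing a basis) agrees with the first half of the paper's argument. But your proof stops exactly where the lemma begins. Reformulating the reverse inclusion as ``$W$ is a semisimple $ZFG$-module'', or as ``every class sum $L^+$ acts on $e_BW$ by the scalar $\omega_B(L^+)$'', is a tautological restatement of what is to be proved, and you explicitly defer its verification (``the remaining task\ldots'', ``the technical heart of the proof''). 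You even identify correctly why the naive orbit-averaging of structure constants fails in characteristic $p$, but you offer nothing in its place. So this is a plan with the decisive idea missing, not a proof.

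The missing idea, which is how the paper (following Passman) closes the gap, is to pass to the quotient group $G/P$. Since the $FP$-basis of $I:=I_{\le P}(G)$ consists of $p$-regular class sums with defect group $P$, and since for $p$-regular $x$ an equation $x^g=xz$ with $z\in P$ forces $z=1$ by comparing orders, the canonical epimorphism $\nu:FG\to F[G/P]$ maps these basis elements bijectively onto class sums of defect zero of $G/P$; hence $\nu(I)\subseteq I_1(G/P)$, and $\Ker(\nu)\cap I=I\cdot JFP$ by freeness. Now one invokes the standard fact that defect-zero class sums lie in the socle, so $I_1(G/P)\cdot JZF[G/P]=0$; together with $\nu(JZFG)\subseteq JZF[G/P]$ (images of nilpotent central elements are nilpotent and central) this yields $\nu(I\cdot JZFG)=0$, i.e. $I\cdot JZFG\subseteq\Ker(\nu)\cap I=I\cdot JFP$. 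This quotient trick is precisely what replaces the computation you could not carry out: it converts the scalar-action statement about $W$ into the known annihilation property of defect-zero classes, rather than into a cancellation argument with structure constants, which is why your ``technical heart'' never needs to be confronted directly.
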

\begin{proof}
Let $K$ be a conjugacy class of $G$ with defect group $P$, and let $x\in K$. Then $P$ is the only Sylow $p$-subgroup of $\C_G(x)$, and the $p$-factor $u$ of $x$ centralizes $x$. Thus $u\in P$. Hence $u$ is the $p$-factor of every element in $K$, and $K=uK'$ where $K'$ is a $p$-regular conjugacy class of $G$ with defect group $P$. 
This shows that $I:=I_{\le P}(G)$ is a free $FP$-module with the $p$-regular class sums with defect group $P$ as an $FP$-basis. The canonical epimorphism $\nu:FG\to F[G/P]$ maps $I$ into $I_1(G/P)\subseteq SF[G/P]$. Thus $\nu(I\cdot JZFG)\subseteq SF[G/P]\cdot JZF[G/P]=0$. Hence $I\cdot JZFG\subseteq I\cdot JFP$. The other inclusion is trivial. 
\end{proof}

\begin{Lemma}\label{lem2}
Let $P\le G$ be a $p$-subgroup of order $p^n$. Then
\begin{enumerate}[(i)]
\item $I_{\le P}(G)\cdot JZFG^{LL(F\Z(P))}\subseteq I_{<P}(G)$.
\item $I_{\le P}(G)\cdot JZFG^{(p^{n+1}-1)/(p-1)}=0$.
\end{enumerate}
\end{Lemma}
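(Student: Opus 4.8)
The plan is to prove (i) via the Brauer homomorphism, reducing it to Passman's lemma applied inside the centralizer $\C_G(P)$, and then to deduce (ii) from (i) by induction on $|P|$.

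For (i), set $H:=\C_G(P)$ and $Z:=\Z(P)$. Since every element of $\Z(P)$ commutes with all of $\C_G(P)$, the subgroup $Z$ is a central $p$-subgroup of $H$, so \autoref{Passlem} applies to the pair $(H,Z)$. I would use the Brauer homomorphism $\Br_P\colon(FG)^P\to FH$, $\sum_g a_g g\mapsto\sum_{g\in H}a_g g$, which is an $F$-algebra homomorphism restricting to an algebra homomorphism $ZFG\to ZFH$ with $\Br_P(K^+)=(K\cap H)^+$ for every class $K$. Three facts drive the argument. First, $ZFG$ and $ZFH$ are commutative Artinian $F$-algebras, so their radicals coincide with their nilradicals and $\Br_P(JZFG)\subseteq JZFH$. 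Second, tracking defect groups shows $\Br_P(I_{\le P}(G))\subseteq I_{\le Z}(H)$: a class with defect group of order smaller than $|P|$ meets no element centralizing $P$ and is annihilated, while a class $K\in\Cl_P(G)$ maps to a sum of $H$-classes each having defect group exactly $\Z(P)$. Third, and most importantly, the same bookkeeping identifies the kernel, giving $I_{\le P}(G)\cap\Ker\Br_P=I_{<P}(G)$. Applying $\Br_P$ to $I_{\le P}(G)\cdot JZFG^{LL(F\Z(P))}$ and invoking the iterated form $I_{\le Z}(H)\cdot JZFH^{k}=I_{\le Z}(H)\cdot JFZ^{k}$ of \autoref{Passlem} (valid by commutativity) with $k=LL(F\Z(P))$, so that $JFZ^{k}=0$, shows that this product lies in $\Ker\Br_P$; as it also lies in the ideal $I_{\le P}(G)$, it lies in $I_{<P}(G)$, which is (i).

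For (ii) I would induct on $n$, where $|P|=p^n$, writing $m_n:=(p^{n+1}-1)/(p-1)=1+p+\dots+p^n$. The base case $n=0$ is exactly \autoref{Passlem} for the trivial central subgroup, giving $I_1(G)\cdot JZFG=0$. For the inductive step, (i) yields $I_{\le P}(G)\cdot JZFG^{LL(F\Z(P))}\subseteq I_{<P}(G)=\sum_{Q<P}I_Q(G)$, and applying the inductive hypothesis to each $I_{\le Q}(G)$ with $Q<P$ gives $I_{<P}(G)\cdot JZFG^{m_{n-1}}=0$, hence $I_{\le P}(G)\cdot JZFG^{LL(F\Z(P))+m_{n-1}}=0$. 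It then remains only to note the standard bound $LL(F\Z(P))\le|\Z(P)|\le p^n$ for the group algebra of a $p$-group, so that $LL(F\Z(P))+m_{n-1}\le p^n+m_{n-1}=m_n$.

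I expect the main obstacle to be the defect-group bookkeeping of the second paragraph: verifying both that $\Cl_P(G)$-classes map into the defect-$\Z(P)$ part of $ZF\C_G(P)$ and that $\Ker\Br_P$ meets $I_{\le P}(G)$ in precisely $I_{<P}(G)$. Once this dictionary between defect groups in $G$ and in $\C_G(P)$ is in place, the rest is the formal combination of Passman's lemma with the nilpotency degree of $JFZ$.
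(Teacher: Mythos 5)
Your proof is correct and follows the paper's argument essentially verbatim: part (i) via the Brauer homomorphism, the identity $\Ker(\Br_P)\cap I_{\le P}(G)=I_{<P}(G)$, and \autoref{Passlem} applied to the central subgroup $\Z(P)$ of $\C_G(P)$; part (ii) by induction using $LL(F\Z(P))\le p^n$. The only immaterial differences are that you spell out the defect-group bookkeeping for $\Br_P$ that the paper takes for granted, and you anchor the induction at $n=0$ via \autoref{Passlem} rather than at $n=1$ via $I_1(G)\subseteq SFG$.
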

\begin{proof}\hfill
\begin{enumerate}[(i)]
\item Let $\Br_P:ZFG\to ZF\C_G(P)$ be the Brauer homomorphism. Since $\Ker(\Br_P)\cap I_{\le P}(G)=I_{<P}(G)$, we need to show that $\Br_P(I_{\le P}(G)\cdot JZFG^{LL(F\Z(P))})=0$. By \autoref{Passlem} we have
\begin{align*}
\Br_P(I_{\le P}(G)\cdot JZFG^{LL(F\Z(P))})&\subseteq I_{\le\Z(P)}(\C_G(P))\cdot JZF\C_G(P)^{LL(F\Z(P))}\\
&=I_{\le\Z(P)}(\C_G(P))\cdot JF\Z(P)^{LL(F\Z(P))}=0.
\end{align*}

\item We argue by induction on $n$. The case $n=1$ follows from $I_1(G)\subseteq SFG$. Now suppose that the claim holds for $n-1$. Since $LL(F\Z(P))\le|P|=p^n$, (i) implies
\begin{align*}
I_{\le P}(G)\cdot JZFG^{(p^{n+1}-1)/(p-1)}&=I_{\le P}(G)\cdot JZFG^{p^n}JZFG^{(p^n-1)/(p-1)}\\
&\subseteq I_{<P}(G)\cdot JZFG^{(p^n-1)/(p-1)}\\
&=\sum_{Q<P}{I_{\le Q}(G)\cdot JZFG^{(p^n-1)/(p-1)}}=0.\qedhere
\end{align*}
\end{enumerate}
\end{proof}

Recall from \cite[Lemma~9]{KS2} the following group
\[W_{p^d}:=\langle x,y,z\mid x^{p^{d-2}}=y^p=z^p=[x,y]=[x,z]=1,\ [y,z]=x^{p^{d-3}}\rangle.\]
Note that $W_{p^d}$ is a central product of $C_{p^{d-2}}$ and an extraspecial group of order $p^3$.
Now we prove our main theorem which improves \cite[Theorem~12]{KS2}. 

\begin{Theorem}\label{nonabel}
Let $B$ be a block of $FG$ with non-abelian defect group $D$ of order $p^d$. Then one of the following holds
\begin{enumerate}[(i)]
\item $LL(ZB)< 3p^{d-2}$.
\item $p\ge 5$, $D\cong W_{p^d}$ and $LL(ZB)<4p^{d-2}$.
\end{enumerate} 
In any case we have \[LL(ZB)<\min\{p^{d-1},4p^{d-2}\}.\]
\end{Theorem}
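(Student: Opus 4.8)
The plan is to use the filtration machinery from \autoref{lem2} together with the structure theory of blocks with non-abelian defect groups. The key quantitative input is \autoref{lem2}(i): if I can control the Loewy length of $F\Z(P)$ for the relevant $p$-subgroups $P\le D$, then I obtain a telescoping bound on how powers of $JZFG$ interact with the filtration $I_{<P}(G)\subseteq I_{\le P}(G)$. The main idea is that $ZB$ sits inside $ZFG$ and its radical powers are controlled by the pieces $I_P(G)$ indexed by the defect groups $P$ that actually occur, the largest of which is $D$ itself. So I would first reduce the problem to understanding $LL$ of the graded pieces $I_{\le P}(G)/I_{<P}(G)$ and then bound the overall Loewy length by summing (or taking a maximum of) the contributions.

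\textbf{Key steps.} First I would set up the filtration of $ZB$ by the ideals $I_{\le P}(B):=I_{\le P}(G)\cap ZB$ as $P$ ranges over subgroups of $D$ up to conjugacy, noting that $I_D(B)$ is the top piece and corresponds essentially to the projective/defect-zero part. Second, I would invoke \autoref{lem2}(i) in the form $JZFG^{LL(F\Z(P))}$ pushes $I_{\le P}$ into $I_{<P}$, so that $LL$ restricted to the $P$-layer is at most $LL(F\Z(P))$ plus the Loewy length already accumulated on the lower layers. Iterating gives a bound of the shape $LL(ZB)\le\sum_P LL(F\Z(P))$ over a chain of subgroups, which I would then estimate. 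The critical estimate is $LL(F\Z(P))\le|\Z(P)|$, and more precisely $LL(F\Z(P))\le (\exp(\Z(P))-1)\cdot(\text{rank factor})+1$ coming from the Jennings/Loewy-length formula for abelian $p$-groups. For $D$ non-abelian one has $|\Z(D)|\le|D|/p^2=p^{d-2}$ (since $D/\Z(D)$ is non-cyclic), which is exactly where the $p^{d-2}$ scale in the target bound comes from, and the constants $3$ versus $4$ distinguish the generic case from the exceptional $W_{p^d}$ case.

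\textbf{Isolating the extremal case.} To get the clean dichotomy (i)/(ii), I would analyze when the summed bound can exceed $3p^{d-2}$. This forces $\Z(D)$ and the chain of centralizer subgroups to be as large as possible, which should pin down $D$ as a central product of a cyclic group with an extraspecial group of order $p^3$, i.e. $D\cong W_{p^d}$, matching \cite[Lemma~9]{KS2} and \cite[Theorem~12]{KS2}. For this exceptional family I would compute $LL(F\Z(W_{p^d}))$ directly (here $\Z(W_{p^d})\cong C_{p^{d-2}}$ is cyclic of order $p^{d-2}$, giving $LL=p^{d-2}$) and track the extraspecial quotient's contribution to land at the sharper $4p^{d-2}$, with the hypothesis $p\ge 5$ entering to rule out small-prime degeneracies. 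The final inequality $LL(ZB)<\min\{p^{d-1},4p^{d-2}\}$ then follows by comparing the two cases: for $p\ge 5$ we have $4p^{d-2}<p^{d-1}$, while case (i) already gives $3p^{d-2}<\min\{p^{d-1},4p^{d-2}\}$ for all $p\ge 2$.

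\textbf{Main obstacle.} The hard part will be the extremal analysis that forces $D\cong W_{p^d}$: proving that no other non-abelian defect group can push $LL(ZB)$ above $3p^{d-2}$ requires a careful case distinction over the possible isomorphism types of $\Z(D)$ and $D/\Z(D)$, and a genuinely sharper argument than the crude summation bound—probably reusing the detailed structural results of \cite{KS2} rather than re-deriving them. I also expect the interaction between the abstract filtration bound and the block-theoretic reduction (passing from $ZFG$ to $ZB$, and handling the Brauer correspondent) to require care, since the naive sum over all subgroups $P\le D$ overcounts and must be replaced by a sum over a single maximal chain realized inside the block.
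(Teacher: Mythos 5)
Your overall strategy---filtering $ZB$ through the ideals $I_{\le P}(G)$ and telescoping with \autoref{lem2}(i) to get a bound of the form $\sum LL(F\Z(P))$ over a chain of subgroups---is exactly the engine of the paper's proof, and your identification of $|\Z(D)|\le p^{d-2}$ as the source of the $p^{d-2}$ scale is correct. However, there are genuine gaps in the places you wave at. First, your closing arithmetic is wrong for $p=2$: $3p^{d-2}<p^{d-1}$ fails there (indeed $3\cdot 2^{d-2}>2^{d-1}$), so case (i) does \emph{not} imply the final bound $LL(ZB)<\min\{p^{d-1},4p^{d-2}\}$ for all $p\ge 2$. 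The paper disposes of $p=2$ at the very start by citing \cite[Proposition~15]{KS2}; without some such input your argument proves nothing new when $p=2$. The problem recurs at $p=3$: the telescoping bound one actually obtains is roughly $2p^{d-2}+p-1+\frac{p^{d-1}-1}{p-1}$, which for $p=3$ is about $3.5\cdot 3^{d-2}$ and so exceeds even $3^{d-1}$. The paper needs a genuinely different technique here --- reduction to $d=4$ and then Otokita's subsection inequality $LL(ZB)\le(|\langle u\rangle|-1)LL(Z\overline{b})+1$ \cite[Theorem~3.2]{Otokita} --- and nothing in your outline supplies a substitute.

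Second, the ``extremal analysis'' you defer is not a refinement of the summation bound, and I do not believe it can be made to work as you describe. The paper does not show that a large value of the telescoped sum forces $D\cong W_{p^d}$; rather, it runs a structural case distinction: if a maximal subgroup of $D$ is cyclic then $D\cong M_{p^d}$ and one quotes \cite[Proposition~10]{KS2}; if $\Z(D)$ is cyclic of index $p^2$ then the classification in \cite[Lemma~9]{KS2} leaves only $M_{p^d}$ and $W_{p^d}$; otherwise $LL(F\Z(D))\le p^{d-3}+p-1$ and the sum drops below $3p^{d-2}$ (for $p>3$). So the dichotomy (i)/(ii) comes from group-theoretic classification results imported from \cite{KS2}, not from optimizing the filtration bound. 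Your plan acknowledges this is ``the hard part'' but leaves it entirely open, and in particular gives no route to the cases $D\cong M_{p^d}$, $p=3$ with $D\cong W_{3^d}$, and $p=2$, all of which require results that are not consequences of \autoref{lem2}.
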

\begin{proof}
By \cite[Proposition~15]{KS2}, we may assume that $p>2$. 
Since $D$ is non-abelian, $|D:\Z(D)|\ge p^2$ and $LL(F\Z(D))\le p^{d-2}$. Let $Q$ be a maximal subgroup of $D$. If $Q$ is cyclic, then $D\cong M_{p^n}$ and the claim follows from \cite[Proposition~10]{KS2}. Hence, we may assume that $Q$ is not cyclic. Then $LL(F\Z(Q))\le p^{d-2}+p-1$. Now setting $\lambda:=\frac{p^{d-1}-1}{p-1}$ it follows from \autoref{lem2} that
\begin{align*}
JZB^{2p^{d-2}+p-1+\lambda}&\subseteq 1_BJZFG^{2p^{d-2}+p-1+\lambda}\subseteq I_{\le D}(G)\cdot JZFG^{2p^{d-2}+p-1+\lambda}\\
&\subseteq I_{<D}(G)\cdot JZFG^{p^{d-2}+p-1+\lambda}=\sum_{Q<D}{I_{\le Q}(G)\cdot JZFG^{p^{d-2}+p-1+\lambda}}\\
&\subseteq \sum_{Q<D}{I_{<Q}(G)\cdot JZFG^{\lambda}}=0.
\end{align*}
Since $2p^{d-2}+p-1+\lambda\le 4p^{d-2}$, we are done in case $p\ge 5$ and $D\cong W_{p^d}$. 
If $p=3$ and $D\cong W_{p^d}$, then the claim follows from \cite[Lemma~11]{KS2}.
Now suppose that $D\not\cong W_{p^d}$. If $\Z(D)$ is cyclic of order $p^{d-2}$, then the claim follows from \cite[Lemma~9 and Proposition~10]{KS2}. Hence, suppose that $\Z(D)$ is non-cyclic or $\lvert\Z(D)\rvert<p^{d-2}$. Then $d\ge 4$ and $LL(F\Z(D))\le p^{d-3}+p-1$. The arguments above give $LL(ZB)\le p^{d-2}+p^{d-3}+2p-2+\lambda$, hence we are done whenever $p>3$.

In the following we assume that $p=3$. 
Here we have $LL(ZB)\le 3^{d-2}+3^{d-3}+4+\frac{1}{2}(3^{d-1}-1)$ and it suffices to handle the case $d=4$. By \cite[Theorem~3.2]{Otokita}, there exists a non-trivial $B$-subsection $(u,b)$ such that 
\[LL(ZB)\le (|\langle u\rangle|-1)LL(Z\overline{b})+1\] 
where $\overline{b}$ is the unique block of $F\C_G(u)/\langle u\rangle$ dominated by $b$. We may assume that $\overline{b}$ has defect group $\C_D(u)/\langle u\rangle$ (see \cite[Lemma~1.34]{habil}). If $u\notin\Z(D)$, we obtain $LL(ZB)<\lvert\C_D(u)\rvert\le 27$ as desired. Hence, let $u\in\Z(D)$. Then $D/\langle u\rangle$ is not cyclic. Moreover, by our assumption on $\Z(D)$, we have $|\langle u\rangle|=3$. 
Now it follows from \cite[Theorem~1, Proposition~10 and Lemma~11]{KS2} applied to $\overline{b}$ that 
\[LL(ZB)\le 2LL(Z\overline{b})+1\le 23<27.\qedhere\] 
\end{proof}

We do not expect that the bounds in \autoref{nonabel} are sharp.
In fact, we do not know if there are $p$-blocks $B$ with non-abelian defect groups of order $p^d$ such that $p>2$ and $LL(ZB)>p^{d-2}$. 
See also \autoref{coexp} below.

\begin{Corollary}\label{noncyc}
Let $B$ be a block of $FG$ with non-cyclic defect group of order $p^d$. Then 
\[LL(ZB)\le p^{d-1}+p-1.\]
\end{Corollary}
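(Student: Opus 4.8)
The plan is to split the argument according to whether $D$ is abelian, reducing everything to the two results already available. If $D$ is non-abelian, then \autoref{nonabel} gives at once
\[
LL(ZB)<\min\{p^{d-1},4p^{d-2}\}\le p^{d-1}\le p^{d-1}+p-1,
\]
so this case requires no further work.

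It therefore remains to treat the case where $D$ is abelian and non-cyclic. Here I would invoke the bound $LL(ZB)\le LL(FD)$ for abelian defect groups from \cite[Theorem~1]{KS2}, which turns the statement into the purely group-theoretic inequality $LL(FD)\le p^{d-1}+p-1$. Writing $D\cong C_{p^{a_1}}\times\dots\times C_{p^{a_k}}$ with $a_1+\dots+a_k=d$, the algebra $FD$ is the tensor product of the local algebras $FC_{p^{a_i}}$, each of Loewy length $p^{a_i}$; since the Loewy length of a tensor product of local $F$-algebras is one less than the sum of the individual Loewy lengths, this yields the familiar formula
\[
LL(FD)=1+\sum_{i=1}^k(p^{a_i}-1).
\]
Non-cyclicity means $k\ge 2$, so the claim reduces to showing $\sum_{i=1}^k p^{a_i}-k\le p^{d-1}+p-2$.

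The last step is a short convexity argument, which I expect to be the only genuine (though elementary) obstacle. Since $x\mapsto p^x$ is strictly convex, for a fixed number of parts $k$ the sum $\sum_i p^{a_i}$ is maximal when the parts are as unbalanced as possible, i.e.\ for the partition $(d-k+1,1,\dots,1)$, where $\sum_i p^{a_i}-k=p^{d-k+1}+(k-1)p-k=:f(k)$. The difference $f(k+1)-f(k)=(p-1)(1-p^{d-k})$ is negative for $2\le k\le d-1$, so $f$ is maximal at $k=2$, where $f(2)=p^{d-1}+p-2$. Therefore $LL(FD)\le p^{d-1}+p-1$, with equality exactly for $D\cong C_{p^{d-1}}\times C_p$, matching the extremal configuration mentioned in the introduction.
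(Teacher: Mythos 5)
Your proof is correct and takes essentially the same route as the paper: reduce to the abelian case via \autoref{nonabel}, then apply \cite[Theorem~1]{KS2} to get $LL(ZB)\le LL(FD)$. The paper simply asserts the inequality $LL(FD)\le p^{d-1}+p-1$ without comment, whereas you supply the (correct) elementary convexity argument for it.
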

\begin{proof}
By \autoref{nonabel}, we may assume that $B$ has abelian defect group $D$. Then \cite[Theorem~1]{KS2} implies $LL(ZB)\le LL(FD)\le p^{d-1}+p-1$.
\end{proof}

We are now in a position to generalize \cite[Corollary~16]{KS2}.

\begin{Corollary}\label{cor}
Let $B$ be a block of $FG$ with defect group $D$ of order $p^d$ such that $LL(ZB)\ge \min\{p^{d-1},4p^{d-2}\}$. Then one of the following holds
\begin{enumerate}[(i)]
\item $D$ is cyclic.
\item $D\cong C_{p^{d-1}}\times C_p$.
\item $D\cong C_2\times C_2\times C_2$ and $B$ is nilpotent.
\end{enumerate}
\end{Corollary}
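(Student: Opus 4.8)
The plan is to read off the structure of the defect group from the two bounds already available and then to treat the single borderline elementary abelian case separately.

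First I would apply the contrapositive of \autoref{nonabel}: the hypothesis $LL(ZB)\ge\min\{p^{d-1},4p^{d-2}\}$ rules out non-abelian $D$, so $D$ is abelian. If $D$ is cyclic we are in case~(i), so assume $D\cong C_{p^{a_1}}\times\dots\times C_{p^{a_r}}$ with $r\ge 2$, $a_1\ge\dots\ge a_r\ge1$ and $a_1+\dots+a_r=d$. By \cite[Theorem~1]{KS2},
\[
LL(ZB)\le LL(FD)=1+\sum_{i=1}^{r}(p^{a_i}-1),
\]
where the last equality is the standard fact $LL(F[C_{p^a}])=p^a$ combined with $LL(A\otimes_F B)=LL(A)+LL(B)-1$ for local $F$-algebras.

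The next step is a purely combinatorial analysis of the inequality $1+\sum_i(p^{a_i}-1)\ge\min\{p^{d-1},4p^{d-2}\}$ over all partitions of $d$ with $r\ge2$ parts. The partition $(d-1,1)$, i.e.\ $D\cong C_{p^{d-1}}\times C_p$, always satisfies it and yields case~(ii). For any other such partition we have $a_1\le d-2$; estimating the maximal value of $\sum_i p^{a_i}$ under the constraints $a_1\le d-2$ and $\sum_i a_i=d$ (attained at $(d-2,2)$ when $d\ge4$ and at $(1,1,1)$ when $d=3$) shows that $LL(FD)<\min\{p^{d-1},4p^{d-2}\}$ in every case except $p=2$, $d=3$, $D\cong C_2\times C_2\times C_2$, for which $LL(FD)=4=\min\{2^{d-1},4\cdot2^{d-2}\}$. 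Here it is convenient to separate $p\ge5$ (where the relevant bound is $4p^{d-2}$) from $p\in\{2,3\}$ (where it is $p^{d-1}$); both computations are routine. Thus, apart from cases~(i) and~(ii), the only surviving possibility is $D\cong C_2^3$, and in that case $LL(ZB)\le LL(FD)=4$ together with the hypothesis forces $LL(ZB)=4$.

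The remaining --- and genuinely delicate --- point is to show that $D\cong C_2^3$ with $LL(ZB)=4$ forces $B$ to be nilpotent, placing it in case~(iii). Since the present corollary generalizes \cite[Corollary~16]{KS2}, the cleanest route is to invoke the $p=2$ analysis carried out there. Failing that, I would argue directly: if $B$ were not nilpotent, its inertial quotient $E$ would be a non-trivial subgroup of odd order of $\Aut(C_2^3)\cong\GL(3,2)$, and I would show $LL(ZB)\le3$, contradicting $LL(ZB)=4$. One concrete way is to use the subsection bound \cite[Theorem~3.2]{Otokita}: for a non-trivial $B$-subsection $(u,\overline b)$ with $\lvert\langle u\rangle\rvert=2$ and $\overline b$ of defect group $\C_D(u)/\langle u\rangle\cong C_2^2$, it gives $LL(ZB)\le LL(Z\overline b)+1$, so $LL(ZB)=4$ would force $LL(Z\overline b)=3=LL(F[C_2^2])$; since a non-nilpotent block with defect group $C_2^2$ has central Loewy length strictly less than $3$ (for instance the principal block of $A_4$, whose center has Loewy length $2$), this forces $\overline b$ nilpotent. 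I expect the main work to lie exactly here: computing the central Loewy length of non-nilpotent blocks with small elementary abelian defect groups, and transferring the resulting local nilpotency back to $B$.
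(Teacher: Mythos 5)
Your primary route is essentially the paper's own proof: reduce to abelian $D$ via the contrapositive of \autoref{nonabel}, bound $LL(ZB)\le LL(FD)$ via \cite[Theorem~1]{KS2}, eliminate every partition other than $(d-1,1)$ by the convexity estimate, and quote \cite[Corollary~16]{KS2} for the single borderline case $D\cong C_2\times C_2\times C_2$. The paper merely orders things differently --- it cites \cite[Corollary~16]{KS2} at the outset to dispose of $p=2$ entirely and then runs the partition estimate only for $p>2$ and $s\ge 3$ --- so if you take your ``cleanest route'', the two proofs coincide (your write-up is if anything slightly more explicit, since the paper leaves the case $s=2$, $a_2\ge 2$ implicit).

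The fallback you sketch for $C_2^3$, however, has a genuine gap at the final ``transfer'' step, and it is not merely unfinished work: that strategy cannot succeed. \cite[Theorem~3.2]{Otokita} asserts only the \emph{existence} of one non-trivial subsection $(u,b)$ satisfying the bound, so from $LL(ZB)=4$ you learn that this particular $\overline b$ is nilpotent; but nilpotency of a single subsection never forces nilpotency of $B$. Concretely, suppose $B$ is non-nilpotent with $D\cong C_2^3$ and inertial quotient $E\cong C_7$, e.g.\ the principal block of $\SL(2,8)$. Then $E$ permutes the seven involutions of $D$ freely, so $\C_E(u)=1$ for every $1\ne u\in D$; hence for \emph{every} non-trivial subsection the dominated block $\overline b$ is nilpotent with defect group $C_2\times C_2$ and $LL(Z\overline b)=3$, and Otokita's inequality can never give anything better than $LL(ZB)\le 4$ --- no contradiction is available. (For $E\cong C_3$ the same problem occurs on its two free orbits of involutions.) What actually closes the case is the equality statement underlying \cite[Corollary~16]{KS2} --- equivalently the ``equality if and only if $B$ is nilpotent'' part of \cite[Theorem~1]{KS2} --- applied directly to $B$: $LL(ZB)=4=LL(FD)$ forces $B$ nilpotent, with no subsection detour. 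Note also that your intermediate claim (every non-nilpotent Klein-four block has central Loewy length less than $3$) is not established by checking the principal block of $A_4$ alone; it needs that same equality statement, or the Morita classification of Klein-four blocks.
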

\begin{proof}
Again by \autoref{nonabel} we may assume that $D$ is abelian. By \cite[Corollary~16]{KS2}, we may assume that $p>2$. Suppose that $D$ is of type $(p^{a_1},\ldots,p^{a_s})$ such that $s\ge 3$. Then 
\begin{align*}
\min\{p^{d-1},4p^{d-2}\}&\le LL(ZB)=p^{a_1}+\ldots+p^{a_s}-s+1\\
&\le p^{a_1}+p^{a_2}+p^{a_3+\ldots+a_s}-2\le p^{d-2}+2(p-1).
\end{align*}
This clearly leads to a contradiction. Therefore, $s\le 2$ and the claim follows.
\end{proof}

In case (i) of \autoref{cor} it is known conversely that $LL(ZB)=\frac{p^d-1}{l(B)}+1>p^{d-1}$ (see \cite[Corollary~2.8]{KKS}).

Our next result gives a more precise bound by invoking the exponent of a defect group.

\begin{Theorem}\label{exp}
Let $B$ be a block of $FG$ with defect group $D$ of order $p^d>1$ and exponent $p^e$. Then
\[LL(ZB)\le \Bigl(\frac{d}{e}+1\Bigr)\Bigl(\frac{d}{2}+\frac{1}{p-1}\Bigr)(p^e-1).\]
In particular, $LL(ZB)\le d^2p^e$.
\end{Theorem}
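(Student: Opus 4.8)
The plan is to bound $LL(ZB)$ by the number of times one must multiply by $JZFG$ to annihilate $I_{\le D}(G)$, and to organize this as a descent through a chain of subgroups of $D$ using \autoref{lem2}(i) repeatedly. Just as in the proof of \autoref{nonabel}, we have $JZB^{m}\subseteq I_{\le D}(G)\cdot JZFG^{m}$, so it suffices to produce an $m$ with $I_{\le D}(G)\cdot JZFG^{m}=0$. The key inductive mechanism is \autoref{lem2}(i): multiplying $I_{\le P}(G)$ by $JZFG^{LL(F\Z(P))}$ pushes us from $I_{\le P}(G)$ into $I_{<P}(G)=\sum_{Q<P}I_{\le Q}(G)$, i.e.\ it strips off one ``layer'' of the subgroup lattice at the cost of $LL(F\Z(P))$ powers of the radical.

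First I would set up a subgroup-chain argument. Pick a chain $D=P_d>P_{d-1}>\dots>P_1>P_0=1$ of subgroups with $|P_i|=p^i$ (refining a composition series of $D$). Applying \autoref{lem2}(i) iteratively along such chains shows that $I_{\le D}(G)$ is annihilated after $\sum_{Q} LL(F\Z(Q))$ powers, where the sum ranges over the groups appearing as we descend; more precisely one shows by induction on $|P|$ that $I_{\le P}(G)\cdot JZFG^{f(P)}=0$ where $f(P)=\max$ over chains below $P$ of the accumulated costs $\sum LL(F\Z(Q))$. The crucial uniform estimate is $LL(F\Z(Q))\le |\Z(Q)|$, and more usefully $LL(F\Z(Q))\le \frac{|\Z(Q)|}{\exp(\Z(Q))}(\exp(Q)-1)+1$-type bounds; since every section of $D$ has exponent at most $p^e$, each factor $\Z(Q)$ has exponent $\le p^e$, giving $LL(F\Z(Q))\le \frac{|\Z(Q)|}{p^e}(p^e-1)+1$ (the standard Jennings/Loewy-length estimate for an abelian $p$-group of exponent $p^e$).

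The heart of the calculation is then to sum these costs over a chain of length $d$. At level $i$ (subgroups of order $p^i$) the relevant center has order at most $p^i$ but exponent at most $p^e$, so its contribution is roughly $\frac{p^i}{p^e}(p^e-1)+1$; however, to get a bound depending on $e$ rather than on $d$ alone one must exploit that once $i>e$ the group $P_i$ cannot be cyclic, so $\Z(P_i)$ is a proper section and its Loewy length is governed by $p^e-1$ times a quotient of $p$-power order. I would split the chain into the bottom $e$ steps (where the naive bound $p^e-1$ per step is fine, contributing about $\frac{1}{p-1}(p^e-1)$ in total after summing the geometric tail and at most $e$ linear terms) and the top $d-e$ steps, on each of which $LL(F\Z(P_i))\le p^{i-e}(p^e-1)+1$ or better. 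Summing $\sum_{i} p^{i-e}(p^e-1)$ telescopes geometrically, and the $+1$ terms contribute at most $d$; grouping these contributions and bounding the geometric sums by the two factors $\bigl(\frac{d}{e}+1\bigr)$ and $\bigl(\frac{d}{2}+\frac{1}{p-1}\bigr)$ yields the stated product $\bigl(\frac{d}{e}+1\bigr)\bigl(\frac{d}{2}+\frac{1}{p-1}\bigr)(p^e-1)$. The final bound $LL(ZB)\le d^2p^e$ then follows by the crude estimates $\frac{d}{e}+1\le d+1$, $\frac{d}{2}+\frac{1}{p-1}\le d$ (for $d\ge 1$) and $p^e-1<p^e$, absorbing the constants.

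The main obstacle I anticipate is getting the combinatorics of the chain sum to collapse into a clean product of the two displayed factors rather than a messier double sum: one has to count how many subgroups of each order actually contribute (a single maximal chain suffices for the induction, so this is really a one-dimensional sum indexed by $i=1,\dots,d$), and then bound $\sum_{i=1}^{d} LL(F\Z(P_i))$ by separating the exponent-limited regime from the order-limited regime. The delicate point is justifying the per-step bound $LL(F\Z(P_i))\le \frac{|\Z(P_i)|}{p^e}(p^e-1)+1$ uniformly, which requires the elementary fact that an abelian $p$-group $A$ of exponent $p^e$ has $LL(FA)=\frac{|A|}{p^e}(p^e-1)+1$; everything else is arithmetic bookkeeping to match the closed form.
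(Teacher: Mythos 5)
Your overall architecture is the same as the paper's: descend along a maximal chain $D=P_d>P_{d-1}>\dots>P_0=1$ using \autoref{lem2}(i), so that $LL(ZB)\le\sum_{i=0}^{d}LL(F\Z(P_i))$, and then bound each summand by a function of $|P_i|$ and $\exp(D)$ alone. That part is fine. The gap is in the per-step estimate. You invoke ``$LL(FA)=\frac{|A|}{p^e}(p^e-1)+1$ for an abelian $p$-group of exponent $p^e$,'' but this is false as an equality (for $A=C_{p^e}\times C_{p^e}$ one has $LL(FA)=2(p^e-1)+1$, not $p^e(p^e-1)+1$), and as an inequality it is exponentially too weak: it grows like $|A|$, i.e.\ like $p^{i}$ at level $i$. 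Feeding $LL(F\Z(P_i))\le p^{i-e}(p^e-1)+1$ into the chain sum gives $\sum_{i}p^{i-e}(p^e-1)\approx p^{d}\cdot\frac{p^e-1}{p^{e-1}(p-1)}$, a geometric sum of order $p^d$. No regrouping will collapse that into the product $\bigl(\frac{d}{e}+1\bigr)\bigl(\frac{d}{2}+\frac{1}{p-1}\bigr)(p^e-1)$, which is only quadratic in $d$ times $p^e$; indeed your bound would not even recover Otokita's $|D|-|D|/\exp(D)+1$. (Your fallback remark that for $i>e$ the subgroup $P_i$ is non-cyclic and hence $\Z(P_i)$ is ``a proper section'' also fails: in the extremal abelian case $\Z(P_i)=P_i$.)

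The estimate you actually need, and the one the paper uses, is \emph{linear} in the exponent of the order: if $A$ is abelian of type $(p^{a_1},\dots,p^{a_r})$ with $\sum a_u=n$ and every $a_u\le e$, then $LL(FA)=\sum_u(p^{a_u}-1)+1$ is maximized by taking as many $a_u=e$ as possible, giving $LL(FA)\le i(p^e-1)+p^{j}$ where $n=ie+j$, $0\le j<e$. Since $\Z(P)$ is abelian of order dividing $|P|$ and exponent dividing $p^e$, this bounds $LL(F\Z(P_n))$ by roughly $\frac{n}{e}(p^e-1)$, and summing over $n=0,\dots,d$ (equivalently over $0\le i\le\lfloor d/e\rfloor$, $0\le j<e$) yields $e(p^e-1)\frac{\alpha(\alpha+1)}{2}+(\alpha+1)\frac{p^e-1}{p-1}$ with $\alpha=\lfloor d/e\rfloor$, which is the stated product. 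One further small point: the passage to $LL(ZB)\le d^2p^e$ is not purely ``crude estimates,'' since $\bigl(\frac{d}{e}+1\bigr)\bigl(\frac{d}{2}+\frac{1}{p-1}\bigr)\le(d+1)\bigl(\frac{d}{2}+1\bigr)$ exceeds $d^2$ for $d\le 3$; those small defects have to be settled separately via \autoref{nonabel} and \autoref{noncyc}.
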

\begin{proof}
Let $\alpha:=\lfloor d/e\rfloor$. Let $P\le D$ be abelian of order $p^{ie+j}$ with $0\le i\le \alpha$ and $0\le j<e$. 
If $P$ has type $(p^{a_1},\ldots,p^{a_r})$, then $a_i\le e$ for $i=1,\ldots,r$ and
\[LL(FP)=(p^{a_1}-1)+\ldots+(p^{a_r}-1)+1\le i(p^e-1)+p^j.\] 
Arguing as in \autoref{nonabel}, we obtain
\begin{align*}
LL(ZB)&\le
\sum_{i=0}^{\alpha}\sum_{j=0}^{e-1}{i(p^e-1)+p^j}=e(p^e-1)\Bigl(\sum_{i=0}^{\alpha}i\Bigr)+(\alpha+1)\frac{p^e-1}{p-1}\\
&=e(p^e-1)\frac{\alpha(\alpha+1)}{2}+(\alpha+1)\frac{p^e-1}{p-1}\\
&\le \Bigl(\frac{d}{e}+1\Bigr)\Bigl(\frac{d}{2}+\frac{1}{p-1}\Bigr)(p^e-1).
\end{align*}
This proves the first claim. For the second claim we note that 
\[\Bigl(\frac{d}{e}+1\Bigr)\Bigl(\frac{d}{2}+\frac{1}{p-1}\Bigr)\le (d+1)\Bigl(\frac{d}{2}+1\Bigr)\le d^2\]
unless $d\le 3$. In these small cases the claim follows from \autoref{nonabel} and \autoref{noncyc}. 
\end{proof}

If $2e>d$ and $p$ is large, then the bound in \autoref{exp} is approximately $dp^e$. 
The groups of the form $G=D=C_{p^e}\times\ldots\times C_{p^e}$ show that there is no bound of the form $LL(ZB)\le Cp^e$ where $C$ is an absolute constant.
A more careful argumentation in the proof above gives the stronger (but opaque) bound
\[LL(ZB)\le \alpha(p^e-1)\Bigl(\frac{e(\alpha-1)}{2}+\frac{1}{p-1}+d-\alpha e\Bigr)+\beta(p^e-1)+\frac{p^{d-\alpha e}-1}{p-1}+p^{d-2-\beta e}\]
for non-abelian defect groups where $\alpha:=\lfloor\frac{d-1}{e}\rfloor$ and $\beta:=\lfloor\frac{d-2}{e}\rfloor$. We omit the details.

In the next result we compute the Loewy length for $d=e+1$. 

\begin{Proposition}\label{coexp}
Let $B$ be a block of $FG$ with non-abelian defect group of order $p^d$ and exponent $p^{d-1}$. Then 
\[LL(ZB)\le\begin{cases}
2^{d-2}+1&\text{if }p=2,\\
p^{d-2}&\text{if }p>2
\end{cases}\]
and both bounds are optimal for every $d\ge 3$.
\end{Proposition}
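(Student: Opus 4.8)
The plan is to treat the upper bound and the sharpness separately, and within the upper bound to split according to the parity of $p$. Since $D$ is non-abelian of order $p^d$ with exponent $p^{d-1}$, its structure is extremely constrained: a non-abelian $p$-group of order $p^d$ containing an element of order $p^{d-1}$ must have a cyclic maximal subgroup. Thus $D$ is one of the groups classified in the standard theory of $p$-groups with a cyclic maximal subgroup, namely (for $p>2$) the modular group $M_{p^d}$, and (for $p=2$) one of $M_{2^d}$, the dihedral group $D_{2^d}$, the semidihedral group $SD_{2^d}$, or the generalized quaternion group $Q_{2^d}$. This case list is the structural backbone of the whole argument.

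For $p>2$ we have $D\cong M_{p^d}$, and the bound $LL(ZB)\le p^{d-2}$ should follow directly from \cite[Proposition~10]{KS2}, which is exactly the result invoked for the modular group in the proof of \autoref{nonabel}. For $p=2$ the four candidate groups all have $\exp(D)=2^{d-1}$; here I would quote the previously known values of $LL(ZB)$ for blocks with dihedral, semidihedral and quaternion defect groups (these are fully determined by Brauer's classification of tame blocks together with the computations in \cite{KS2}), check that each gives $LL(ZB)\le 2^{d-2}+1$, and dispose of $M_{2^d}$ again via \cite[Proposition~10]{KS2}. The reason the $p=2$ bound is larger by $1$ is that the dihedral/semidihedral/quaternion cases genuinely attain $2^{d-2}+1$, whereas for odd $p$ the modular group gives the smaller value.

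For optimality I would exhibit explicit examples realizing each bound for every $d\ge 3$. The natural choice is a nilpotent block, since for a nilpotent block $B$ one has $ZB\cong Z(FD)$ (via the Broué--Puig structure theorem), so $LL(ZB)=LL(Z(FD))$ reduces to a purely group-algebra computation. Taking $D\cong M_{p^d}$ for $p>2$ and, say, $D\cong Q_{2^d}$ (or $D_{2^d}$) for $p=2$, I would compute $LL(Z(FD))$ directly and verify it equals $p^{d-2}$ and $2^{d-2}+1$ respectively; the simplest concrete instance is $G=D$ itself, where $B=FD$ is the principal block.

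The main obstacle I expect is the $p=2$ upper bound. Unlike the odd case, it is not a single application of one proposition: the four tame families must each be handled, and the quaternion and semidihedral cases in particular require the detailed Loewy-length data for tame blocks rather than a clean structural reduction. Verifying that none of these four families exceeds $2^{d-2}+1$ — and pinning down exactly which family attains it, so that the sharpness example is correctly chosen — is the delicate part, whereas the odd case and the general structural reduction to groups with a cyclic maximal subgroup are routine.
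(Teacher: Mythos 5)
Your structural reduction (a non-abelian group of order $p^d$ and exponent $p^{d-1}$ has a cyclic maximal subgroup, hence is $M_{p^d}$ for odd $p$ and one of $M_{2^d}$, $D_{2^d}$, $SD_{2^d}$, $Q_{2^d}$ for $p=2$) and your treatment of the odd case via \cite[Proposition~10]{KS2} agree with the paper. The $p=2$ half, however, has a genuine error in the sharpness argument. For a nilpotent block with defect group $D\in\{D_{2^d},Q_{2^d},SD_{2^d}\}$ one has $ZB\cong ZFD$ and, by \cite[Proposition~8]{KS2} (which the paper invokes at exactly this point), $LL(ZFD)\le LL(FD')=2^{d-2}$, since $D'$ is cyclic of order $2^{d-2}$. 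So your proposed witnesses $G=D=Q_{2^d}$ (or $D_{2^d}$) give Loewy length at most $2^{d-2}$, not $2^{d-2}+1$; already for $Q_8$ a direct computation shows $(JZFQ_8)^2=0$, so $LL(ZFQ_8)=2$ rather than $3$. The value $2^{d-2}+1$ is attained only by \emph{non-nilpotent} tame blocks (e.g.\ principal blocks of $\PSL(2,q)$ or $\SL(2,q)$ with the appropriate Sylow $2$-subgroups) — the paper emphasizes precisely this asymmetry between $p=2$ and $p>2$ in the remark following the proposition. Choosing a nilpotent example is not a repairable slip in presentation: it points at the wrong mechanism for why the bound is sharp.

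Separately, the upper bound for $p=2$ is not something that can be ``quoted from previously known values.'' The paper has to establish it: it uses the Cabanes--Picaronny result that $ZB$ is determined by $D$ and the fusion system, reduces the nilpotent case and the case $D\cong D_{2^d}$, $l(B)=3$ to known inequalities (the latter via $LL(ZB)\le k(B)-l(B)+1$ from \cite[Proposition~2.2]{Otokita}), and then works through six remaining families using Erdmann's and Holm's quiver presentations, writing down an explicit basis of $ZB$ in each case and multiplying out all pairs of basis elements of $JZB$ to pin down $LL(ZB)=2^{d-2}+1$. You correctly identify this as the delicate part, but deferring it leaves the main content of the $p=2$ statement unproved; as it stands the proposal neither proves the bound nor exhibits a block attaining it.
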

\begin{proof}
Let $D$ be a defect group of $B$. 
If $p>2$, then $D\cong M_{p^d}$ and we have shown $LL(ZB)\le p^{d-2}$ in \cite[Proposition~10]{KS2}. Equality holds if and only if $B$ is nilpotent.

Therefore, we may assume $p=2$ in the following. 
The modular groups $M_{2^d}$ are still handled by \cite[Proposition~10]{KS2}.
Hence, it remains to consider the defect groups of maximal nilpotency class, i.\,e. $D\in\{D_{2^d},Q_{2^d},SD_{2^d}\}$. By \cite[Proposition~10]{KS2}, we may assume that $d\ge 4$. The isomorphism type of $ZB$ is uniquely determined by $D$ and the fusion system of $B$ (see \cite{Cabanes}). The possible cases are listed in \cite[Theorem~8.1]{habil}. If $B$ is nilpotent, \cite[Proposition~8]{KS2} gives $LL(ZB)=LL(ZFD)\le LL(FD')=2^{d-2}$. Moreover, in the case $D\cong D_{2^d}$ and $l(B)=3$ we have $LL(ZB)\le k(B)-l(B)+1=2^{d-2}+1$ by \cite[Proposition~2.2]{Otokita}. In the remaining cases we present $B$ by quivers with relations which were constructed originally by Erdmann~\cite{Erdmann}. We refer to \cite[Appendix B]{HolmHabil}. 

\begin{enumerate}[(i)]
\item $D\cong D_{2^d}$, $l(B)=2$:
\begin{center}
\begin{tabular}{lr}
$\begin{tikzcd}
\circ \arrow[out=135,in=225,loop,"\alpha",swap]\arrow[r,bend left,"\beta"] & \circ\arrow[in=315,loop,"\eta"]\arrow[l,bend left,"\gamma"]
\end{tikzcd}
$
&
$\begin{array}{c}
\beta\eta=\eta\gamma=\gamma\beta=\alpha^2=0,\\
\alpha\beta\gamma=\beta\gamma\alpha,\\
\eta^{2^{d-2}}=\gamma\alpha\beta.
\end{array}$
\end{tabular}
\end{center}

By \cite[Lemma 2.3.3]{HolmHabil}, we have
\[ZB=\Span\{1,\beta\gamma,\alpha\beta\gamma,\eta^i:i=1,\ldots,2^{d-2}\}.\]
It follows that $JZB^2=\langle\eta^2\rangle$ and $LL(ZB)=2^{d-2}+1$.

\item $D\cong Q_{2^d}$, $l(B)=2$:
Here \cite[Lemma~6]{Zimmermann} gives the isomorphism type of $ZB$ directly as a quotient of a polynomial ring 
\[ZB\cong F[U,Y,S,T]/(Y^{2^{d-2}+1},U^2-Y^{2^{d-2}},S^2,T^2,SY,SU,ST,UY,UT,YT).\]
It follows that $JZB^2=(Y^2)$ and again $LL(ZB)=2^{d-2}+1$.

\item $D\cong Q_{2^d}$, $l(B)=3$:
\begin{center}
\begin{tabular}{lr}
$\begin{tikzcd}
\circ\arrow[rr,shift left=.5ex,"\beta"]
\arrow[dr,shift left=.5ex,"\kappa"]&&\circ
\arrow[ll,shift left=.5ex,"\gamma"]
\arrow[dl,shift left=.5ex,"\delta"]\\[4ex]
&\circ\arrow[ul,shift left=.5ex,"\lambda"]
\arrow[ur,shift left=.5ex,"\eta"]
\end{tikzcd}
$
&
$\begin{array}{c}
\beta\delta=(\kappa\lambda)^{2^{d-2}-1}\kappa,\ \eta\gamma=(\lambda\kappa)^{2^{d-2}-1}\lambda,\\
\delta\lambda=\gamma\beta\gamma,\ \kappa\eta=\beta\gamma\beta,\ \lambda\beta=\eta\delta\eta,\\
\gamma\kappa=\delta\eta\delta,\ \gamma\beta\delta=\delta\eta\gamma=\lambda\kappa\eta=0.
\end{array}$
\end{tabular}
\end{center}
By \cite[Lemma~2.5.15]{HolmHabil},
\[ZB=\Span\{1,\beta\gamma+\gamma\beta,(\kappa\lambda)^i+(\lambda\kappa)^i,\delta\eta+\eta\delta,(\beta\gamma)^2,(\lambda\kappa)^{2^{d-2}},(\delta\eta)^2:i=1,\ldots,2^{d-2}-1\}.\]
We compute
\begin{align*}
(\beta\gamma+\gamma\beta)^2=(\beta\gamma)^2+(\gamma\beta)^2&=(\beta\gamma)^2+\delta\lambda\beta=(\beta\gamma)^2+(\delta\eta)^2,\\
(\beta\gamma+\gamma\beta)(\kappa\lambda+\lambda\kappa)&=\beta\gamma\kappa\lambda=\beta\delta\eta\delta\lambda=\beta\delta\eta\gamma\beta\gamma=0,\\
(\beta\gamma+\gamma\beta)(\delta\eta+\eta\delta)&=\gamma\beta\delta\eta=0,\\
(\beta\gamma+\gamma\beta)(\beta\gamma)^2&=(\beta\gamma)^3=\beta\gamma\beta\delta\lambda=0,\\
(\beta\gamma+\gamma\beta)(\lambda\kappa)^{2^{d-2}}&=0,\\
(\beta\gamma+\gamma\beta)(\delta\eta)^2&=\gamma\beta\delta\eta\delta\eta=0,\\
((\kappa\lambda)^{2^{d-2}-1}+(\lambda\kappa)^{2^{d-2}-1})(\kappa\lambda+\lambda\kappa)&=\kappa\eta\gamma+(\lambda\kappa)^{2^{d-2}}=(\beta\gamma)^2+(\lambda\kappa)^{2^{d-2}},\\
(\kappa\lambda+\lambda\kappa)(\delta\eta+\eta\delta)&=\lambda\kappa\eta\delta=0,\\
(\kappa\lambda+\lambda\kappa)(\beta\gamma)^2&=\kappa\lambda\beta\gamma\beta\gamma=\kappa\eta\delta\eta\gamma\beta\gamma=0,\\
(\kappa\lambda+\lambda\kappa)(\lambda\kappa)^{2^{d-2}}&=\lambda\kappa\eta\gamma\kappa=0,\\
(\kappa\lambda+\lambda\kappa)(\delta\eta)^2&=0,\\
(\delta\eta+\eta\delta)^2=(\delta\eta)^2+(\eta\delta)^2&=(\delta\eta)^2+\lambda\beta\delta=(\delta\eta)^2+(\lambda\kappa)^{2^{d-2}},\\
(\delta\eta+\eta\delta)(\beta\gamma)^2&=0,\\
(\delta\eta+\eta\delta)(\lambda\kappa)^{2^{d-2}}&=\eta\delta(\lambda\kappa)^{2^{d-2}}=\eta\delta\eta\gamma\kappa=0,\\
(\delta\eta+\eta\delta)(\delta\eta)^2&=\delta\lambda\beta\delta\eta=\gamma\beta\gamma\beta\delta\eta=0,\\
(\beta\gamma)^2(\beta\gamma)^2&=(\beta\gamma)^2(\lambda\kappa)^{2^{d-2}}=(\beta\gamma)^2(\delta\eta)^2=0,\\
(\lambda\kappa)^{2^{d-2}}(\lambda\kappa)^{2^{d-2}}&=(\lambda\kappa)^{2^{d-2}}(\delta\eta)^2=0,\\
(\delta\eta)^2(\delta\eta)^2&=\gamma\kappa\eta(\delta\eta)^2=\gamma\beta\gamma\beta(\delta\eta)^2=0.
\end{align*}
Hence, $JZB^2=\langle (\lambda\kappa)^2+(\kappa\lambda)^2,(\beta\gamma)^2+(\delta\eta)^2\rangle$ and $JZB^3=\langle(\lambda\kappa)^3+(\kappa\lambda)^3\rangle$. This implies $LL(ZB)=2^{d-2}+1$.

\item $D\cong SD_{2^d}$, $k(B)=2^{d-2}+3$ and $l(B)=2$:
\begin{center}
\begin{tabular}{lr}
$\begin{tikzcd}
\circ \arrow[out=135,in=225,loop,"\alpha",swap]\arrow[r,bend left,"\beta"] & \circ\arrow[in=315,loop,"\eta"]\arrow[l,bend left,"\gamma"]
\end{tikzcd}
$
&
$\begin{array}{c}
\gamma\beta=\eta\gamma=\beta\eta=0,\\
\alpha^2=\beta\gamma,\ \alpha\beta\gamma=\beta\gamma\alpha,\\
\eta^{2^{d-2}}=\gamma\alpha\beta.
\end{array}$
\end{tabular}
\end{center}

By \cite[Section~5.1]{HolmZimmermann}, we have
\[ZB=\Span\{1,\beta\gamma,\alpha\beta\gamma,\eta^i:i=1,\ldots,2^{d-2}\}.\]
As in (i) we obtain $JZB^2=\langle\eta^2\rangle$ and $LL(ZB)=2^{d-2}+1$.

\item $D\cong SD_{2^d}$, $k(B)=2^{d-2}+4$ and $l(B)=2$:
\begin{center}
\begin{tabular}{lr}
$\begin{tikzcd}
\circ \arrow[out=135,in=225,loop,"\alpha",swap]\arrow[r,bend left,"\beta"] & \circ\arrow[in=315,loop,"\eta"]\arrow[l,bend left,"\gamma"]
\end{tikzcd}
$
&
$\begin{array}{c}
\beta\eta=\alpha\beta\gamma\alpha\beta,\ \gamma\beta=\eta^{2^{d-2}-1},\\
\eta\gamma=\gamma\alpha\beta\gamma\alpha,\\
\beta\eta^2=\eta^2\gamma=\alpha^2=0.
\end{array}$
\end{tabular}
\end{center}

By \cite[Section~5.2.2]{HolmZimmermann}, we have
\[ZB=\Span\{1,\alpha\beta\gamma+\beta\gamma\alpha+\gamma\alpha\beta,\beta\gamma\alpha\beta\gamma,(\alpha\beta\gamma)^2,\eta^i,\eta+\alpha\beta\gamma\alpha:i=2,\ldots,2^{d-2}\}.\]
Since $(\alpha\beta\gamma)^2=\beta\eta\gamma=(\beta\gamma\alpha)^2$ and $(\gamma\alpha\beta)^2=\eta\gamma\beta=\eta^{2^{d-2}}$, it follows that
\[(\alpha\beta\gamma+\beta\gamma\alpha+\gamma\alpha\beta)^2=(\alpha\beta\gamma)^2+(\beta\gamma\alpha)^2+(\gamma\alpha\beta)^2=\eta^{2^{d-2}}.\]
Similarly,
\begin{align*}
(\alpha\beta\gamma+\beta\gamma\alpha+\gamma\alpha\beta)\beta\gamma\alpha\beta\gamma&=0,\\
(\alpha\beta\gamma+\beta\gamma\alpha+\gamma\alpha\beta)(\alpha\beta\gamma)^2&=0,\\
(\alpha\beta\gamma+\beta\gamma\alpha+\gamma\alpha\beta)\eta^2&=0,\\
(\alpha\beta\gamma+\beta\gamma\alpha+\gamma\alpha\beta)(\eta+\alpha\beta\gamma\alpha)&=0,\\
(\beta\gamma\alpha\beta\gamma)^2&=0,\\
\beta\gamma\alpha\beta\gamma(\alpha\beta\gamma)^2&=0,\\
\beta\gamma\alpha\beta\gamma\eta^2=\beta\gamma\alpha\beta\eta^2\gamma&=0,\\
\beta\gamma\alpha\beta\gamma(\eta+\alpha\beta\gamma\alpha)=\beta\gamma(\alpha\beta\gamma)^2\alpha&=0,\\
(\alpha\beta\gamma)^2(\alpha\beta\gamma)^2&=0,\\
(\alpha\beta\gamma)^2\eta^2&=0,\\
(\alpha\beta\gamma)^2(\eta+\alpha\beta\gamma\alpha)&=0,\\
\eta^2(\eta+\alpha\beta\gamma\alpha)&=\eta^3,\\
(\eta+\alpha\beta\gamma\alpha)^2&=\eta^2.
\end{align*}
Consequently, $JZB^2=\langle\eta^2\rangle$ and $LL(ZB)=2^{d-2}+1$.

\item $D\cong SD_{2^d}$, $l(B)=3$:
\begin{center}
\begin{tabular}{lr}
$\begin{tikzcd}
\circ\arrow[rr,shift left=.5ex,"\beta"]
\arrow[dr,shift left=.5ex,"\kappa"]&&\circ
\arrow[ll,shift left=.5ex,"\gamma"]
\arrow[dl,shift left=.5ex,"\delta"]\\[4ex]
&\circ\arrow[ul,shift left=.5ex,"\lambda"]
\arrow[ur,shift left=.5ex,"\eta"]
\end{tikzcd}
$
&
$\begin{array}{c}
\kappa\eta=\eta\gamma=\gamma\kappa=0,\ \delta\lambda=(\gamma\beta)^{2^{d-2}-1}\gamma,\\
\beta\delta=\kappa\lambda\kappa,\ \lambda\beta=\eta.
\end{array}$
\end{tabular}
\end{center}
From \cite[Lemma 2.4.16]{HolmHabil} we get
\[ZB=\Span\{1,(\beta\gamma)^i+(\gamma\beta)^i,\kappa\lambda+\lambda\kappa,(\beta\gamma)^{2^{d-2}},(\lambda\kappa)^2,\delta\eta:i=1,\ldots,2^{d-2}-1\}.\]
We compute
\begin{align*}
(\beta\gamma+\gamma\beta)((\beta\gamma)^{2^{d-2}-1}+(\gamma\beta)^{2^{d-2}-1})&=(\beta\gamma)^{2^{d-2}}+\delta\lambda\beta=(\beta\gamma)^{2^{d-2}}+\delta\eta,\\
(\beta\gamma+\gamma\beta)(\kappa\lambda+\lambda\kappa)&=\beta\gamma\kappa\lambda=0,\\
(\beta\gamma+\gamma\beta)(\beta\gamma)^{2^{d-2}}&=\beta\delta\lambda\beta\gamma=\kappa\lambda\kappa\eta\gamma=0,\\
(\beta\gamma+\gamma\beta)(\lambda\kappa)^2&=0,\\
(\beta\gamma+\gamma\beta)\delta\eta&=\gamma\beta\delta\eta=\gamma\kappa\lambda\kappa\eta=0,\\
(\kappa\lambda+\lambda\kappa)^2&=\beta\delta\lambda+(\lambda\kappa)^2=(\beta\gamma)^{2^{d-2}}+(\lambda\kappa)^2,\\
(\kappa\lambda+\lambda\kappa)(\beta\gamma)^{2^{d-2}}&=\kappa\lambda\beta\gamma(\beta\gamma)^{2^{d-2}-1}=\kappa\eta\gamma(\beta\gamma)^{2^{d-2}-1}=0,\\
(\kappa\lambda+\lambda\kappa)(\lambda\kappa)^2&=\lambda(\beta\gamma)^{2^{d-2}}\kappa=\eta\gamma(\beta\gamma)^{2^{d-2}-1}\kappa=0,\\
(\kappa\lambda+\lambda\kappa)\delta\eta&=0,\\
(\beta\gamma)^{2^{d-2}}(\beta\gamma)^{2^{d-2}}&=(\beta\gamma)^{2^{d-2}}(\lambda\kappa)^2=(\beta\gamma)^{2^{d-2}}\delta\eta=0,\\
(\lambda\kappa)^2(\lambda\kappa)^2&=(\lambda\kappa)^2\delta\eta=0,\\
(\delta\eta)^2&=\delta\lambda\beta\delta\eta=\delta\lambda\kappa\lambda\kappa\eta=0.
\end{align*}
Hence, $JZB^2=\langle (\beta\gamma)^2+(\gamma\beta)^2,(\kappa\lambda)^2+\delta\eta\rangle$ and $JZB^3=\langle(\beta\gamma)^3+(\gamma\beta)^3\rangle$. This implies $LL(ZB)=2^{d-2}+1$.\qedhere
\end{enumerate}
\end{proof}

It is interesting to note the difference between even and odd primes in \autoref{coexp}. For $p=2$, non-nilpotent blocks gives larger Loewy lengths while for $p>2$ the maximal Loewy length is only assumed for nilpotent blocks. 

Recall that a \emph{lower defect group} of a block $B$ of $FG$ is a $p$-subgroup $Q\le G$ such that 
\[I_{<Q}(G)1_B\ne I_{\le Q}(G)1_B.\] 
In this case $Q$ is conjugate to a subgroup of a defect group $D$ of $B$ and conversely $D$ is also a lower defect group since $1_B\in I_{\le D}(G)\setminus I_{<D}(G)$. 
It is clear that in the proofs of \autoref{nonabel} and \autoref{exp} it suffices to sum over the lower defect groups of $B$. In particular there exists a chain of lower defect groups $Q_1<\ldots <Q_n=D$ such that $LL(ZB)\le\sum_{i=1}^nLL(F\Z(Q_i))$. Unfortunately, it is hard to compute the lower defect groups of a given block.

The following proposition generalizes \cite[Theorem~1.5]{Willemsreptype}.

\begin{Proposition}
Let $B$ be a block of $FG$. Then $ZB$ is uniserial if and only if $B$ is nilpotent with cyclic defect groups.
\end{Proposition}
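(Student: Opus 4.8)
The key observation is that $ZB$ is a finite-dimensional commutative local $F$-algebra (since $1_B$ is a primitive idempotent of $ZFG$) with residue field $F$, so $ZB$ is uniserial if and only if $\dim_F JZB/JZB^2\le 1$, which is in turn equivalent to $\dim_F ZB=LL(ZB)$. As $\dim_F ZB=k(B)$, uniseriality amounts to the single numerical identity $k(B)=LL(ZB)$, and I would organize the whole proof around this equality. For the implication coming from nilpotency, if $B$ is nilpotent with cyclic defect group $D$ then $ZB\cong ZFD=FD\cong F[x]/(x^{p^d})$ by \cite[Proposition~8]{KS2} together with $D$ being cyclic, and this is visibly uniserial. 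This settles the easy direction.

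For the converse I would first read off $l(B)$ from the socle. A uniserial algebra has simple socle, so $\dim_F\operatorname{soc}(ZB)=1$; on the other hand the Reynolds ideal $R(B)=ZB\cap\operatorname{soc}(FG\cdot 1_B)$ is contained in $\operatorname{soc}(ZB)$ and satisfies $\dim_F R(B)=l(B)$. Hence $l(B)=1$. If $D$ is cyclic we are then immediately done: Dade's formulas $k(B)=l(B)+\frac{p^d-1}{l(B)}$ together with $LL(ZB)=\frac{p^d-1}{l(B)}+1$ (the latter from \cite[Corollary~2.8]{KKS}) turn $k(B)=LL(ZB)$ into $l(B)=1$, and a block with cyclic defect group and inertial index $1$ is nilpotent. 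Thus everything reduces to excluding non-cyclic defect groups.

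To rule out non-cyclic $D$ I would argue that $\dim_F JZB/JZB^2\ge 2$, contradicting uniseriality. When $B$ is nilpotent this is transparent: $ZB\cong ZFD$, and the class sums of a generating set of $D/\Phi(D)$ contribute $\operatorname{rank}(D)\ge 2$ linearly independent elements to $JZFD/JZFD^2$, so the maximal ideal of $ZB$ is not principal. The substantial case is therefore a non-nilpotent block with $l(B)=1$ and non-cyclic $D$. Here I would pass to the Brauer correspondent $\overline{B}$ in $\N_G(D)$, which has normal defect group $D$: the Brauer homomorphism $\Br_D$ restricts to an $F$-algebra map on $ZB$ whose image is a quotient of $ZB$, hence uniserial, and lands in $Z\overline{B}$. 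Using Külshammer's description of blocks with normal defect group as matrix algebras over twisted group algebras of $D\rtimes E$, one computes the center in terms of $D$ and the inertial quotient $E$ and checks that uniseriality forces $E=1$ and $D$ cyclic.

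The main obstacle is precisely this last step. The socle argument only delivers $l(B)=1$, which is not known to force nilpotency on its own, and the Brauer homomorphism only shows that a \emph{quotient} of $ZB$ embeds into $Z\overline{B}$; it does not directly give that $Z\overline{B}$ is uniserial. Making the reduction to the local block $\overline{B}$ rigorous, by bounding $\dim_F JZB/JZB^2$ from below by a defect-group invariant that survives passage to $\N_G(D)$, is where the real work lies. An alternative route would be to establish a lower bound $k(B)\ge\min\{p^{d-1},4p^{d-2}\}$ for non-cyclic $D$ so as to invoke \autoref{cor} directly; the two remaining defect groups $C_{p^{d-1}}\times C_p$ and $C_2\times C_2\times C_2$ admitted there are then eliminated by the rank computation above, since in those cases $\dim_F JZB/JZB^2\ge 2$ as well. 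Either way, proving that non-cyclic defect groups cannot occur is the crux of the argument.
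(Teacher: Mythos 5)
Your preliminary reductions are all correct: $ZB$ is local, uniseriality is equivalent to $\dim_F JZB/JZB^2\le 1$ and to $k(B)=LL(ZB)$; the Reynolds-ideal argument does give $l(B)=1$; and the Dade/KKS computation correctly settles the case of cyclic $D$. The easy direction is also fine and matches the paper. But there is a genuine gap exactly where you say there is one: nothing in your proposal excludes a non-nilpotent block with non-cyclic defect group, and neither of the two routes you sketch closes this. The Brauer-correspondent route only shows that a \emph{quotient} of $ZB$ maps into $Z\overline{B}$, as you yourself concede, so uniseriality cannot be transported to the block with normal defect group; and your alternative route rests on a lower bound $k(B)\ge\min\{p^{d-1},4p^{d-2}\}$ for non-cyclic $D$ that is false in general (non-nilpotent blocks with elementary abelian or extraspecial defect groups can have $k(B)$ far below $p^{d-1}$) and is not proved by you even under the uniseriality hypothesis. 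Note also that $l(B)=1$ by itself is far from implying nilpotency, so the socle argument, while correct, does not carry the load you would need it to.

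The missing idea, which the paper uses and which makes all of this case analysis unnecessary, is \emph{symmetry}: if $ZB$ is uniserial, then $ZB\cong F[X]/(X^n)$, which is a symmetric $F$-algebra, and by the theorem of Okuyama and Tsushima \cite[Theorems 3 and 5]{OkuyamaTsushima}, symmetry of $ZB$ already forces $B$ to be nilpotent with abelian defect group $D$. Once nilpotency and abelianness are in hand, the Broué--Puig theorem gives a Morita equivalence between $B$ and $FD$, hence $ZB\cong ZFD=FD$; since $ZB$ is uniserial, so is $FD$, which forces $D$ to be cyclic. In other words, the paper obtains both ``nilpotent'' and ``abelian defect group'' in one stroke from the symmetric-algebra criterion --- precisely the two conclusions that your socle and Brauer-correspondent arguments cannot reach. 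Without the Okuyama--Tsushima input (or some substitute of comparable strength), the hard direction of the proposition remains unproved.
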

\begin{proof}
Suppose first that $ZB$ is uniserial. Then $ZB\cong F[X]/(X^n)$ for some $n\in\mathbb{N}$; in particular, $ZB$ is a symmetric $F$-algebra. Then \cite[Theorems 3 and 5]{OkuyamaTsushima} implies that $B$ is nilpotent with abelian defect group $D$. Thus, by a result of Broué and Puig~\cite{BrouePuig} (see also \cite{Kstructure}), $B$ is Morita equivalent to $FD$; in particular, $FD$ is also uniserial. Thus $D$ is cyclic.

Conversely, suppose that $B$ is nilpotent with cyclic defect group $D$. Then the Broué-Puig result mentioned above implies that $B$ is Morita equivalent of $FD$. Thus $ZB\cong ZFD=FD$. Since $FD$ is uniserial, the result follows.
\end{proof}

A similar proof shows that $ZB$ is isomorphic to the group algebra of the Klein four
group over an algebraically closed field of characteristic $2$ if and only if $B$ is nilpotent with
Klein four defect groups.

\section*{Acknowledgment}
The third author is supported by the German Research Foundation (project SA 2864/1-1) and the Daimler and Benz Foundation (project 32-08/13).

\end{document}